\documentclass[12pt,twoside]{amsart}
\usepackage{amsmath}
\usepackage{amsthm}
\usepackage{amsfonts}
\usepackage{amssymb}
\usepackage{latexsym}
\usepackage[all]{xy}
\usepackage{epsfig}
\usepackage{amscd,mathrsfs,graphicx,mathrsfs}
\usepackage[numbers,sort&compress]{natbib}

\date{}
\pagestyle{plain}
\textheight= 22.5 true cm \textwidth =17.3 true cm
\allowdisplaybreaks[4] \footskip=15pt

\renewcommand{\uppercasenonmath}[1]{}

\topmargin=8pt \evensidemargin0pt \oddsidemargin0pt

\numberwithin{equation}{section} \theoremstyle{plain}
\newtheorem*{theorem*}{Theorem A}
\newtheorem*{theorem**}{Theorem B}
\newtheorem{theorem}{Theorem}[section]
\newtheorem{corollary}[theorem]{Corollary}
\newtheorem*{corollary*}{Corollary}
\newtheorem{lemma}[theorem]{Lemma}
\newtheorem*{lemma*}{Lemma}
\newtheorem{proposition}[theorem]{Proposition}
\newtheorem*{proposition*}{Proposition}
\newtheorem{remark}[theorem]{Remark}
\newtheorem*{remark*}{Remark}

\newtheorem*{example*}{Example}
\newtheorem*{observation*}{Observation}
\newtheorem{definition}[theorem]{Definition}
\newtheorem*{definition*}{Definition}

\newtheorem*{conjecture*}{Conjecture}
\newtheorem*{ack*}{ACKNOWLEDGEMENTS}
\newtheorem*{fund*}{Funding}




\newcommand{\pf}{\noindent\begin {proof}}
\newcommand{\epf}{\end{proof}}


\begin{document}
\begin{center}
{\Large \bf On Gorenstein homological dimension of groups

 \footnotetext{
$\ast$ Corresponding author; e-mail address: wren@cqnu.edu.cn.}}

\vspace{0.5cm}  Yuxiang Luo, Wei Ren$^{\ast}$  \\
{\small School of Mathematical Sciences, Chongqing Normal University, Chongqing 401331, P.R. China}
\end{center}


\bigskip

\centerline { \bf  Abstract}
Let $G$ be a group and $R$ be a ring. We define the Gorenstein homological dimension of $G$ over $R$, denoted by ${\rm Ghd}_{R}G$, as the Gorenstein flat dimension of trivial $RG$-module $R$. It is proved that ${\rm Ghd}_SG \leq {\rm Ghd}_RG$ for any flat extension of commutative rings $R\rightarrow S$; in particular, ${\rm Ghd}_{R}G$ is a refinement of  ${\rm Ghd}_{\mathbb{Z}}G$ if $R$ is $\mathbb{Z}$-torsion-free. We show a Gorenstein homological version of Serre's theorem, i.e. ${\rm Ghd}_{R}G = {\rm Ghd}_{R}H$ for any subgroup $H$ of $G$ with finite index. As an application, $G$ is a finite group if and only if ${\rm Ghd}_{R}G = 0$; this is different from the fact that the homological dimension of any non-trivial finite group is infinity.

\leftskip10truemm \rightskip10truemm

\bigskip

{\noindent \it Key Words:} Gorenstein homological dimension, group ring, Gorenstein flat.\\
{\it 2010 MSC:}  20J05, 18G20, 16S34.\\

\leftskip0truemm \rightskip0truemm \vbox to 0.2cm{}

\section { \bf Introduction}

In group theory, it is a long history issue to study groups through their (co)homological properties, which arose from both topological and algebraic sources. For any group $G$, the \emph{cohomolgical dimension} ${\rm cd}_{\mathbb{Z}}G$ and the \emph{homolgical dimension} ${\rm hd}_{\mathbb{Z}}G$, is defined as the projective dimension and the flat dimension of the trivial $\mathbb{Z}G$-module $\mathbb{Z}$, respectively.

Enochs, Jenda and Torrecillas introduced the concepts of Gorenstein projective, Gorenstein injective and Gorenstein flat modules, and developed Gorenstein homological algebra \cite{EJ00}, which has its origin dated back to the study of $G$-dimension by Auslander and Bridger \cite{AB69} in 1960s. As counterparts in Gorenstein homological algebra, the Gorenstein (co)homological dimensions of groups are extensively studied, see for example \cite{ABS09, ABHS11, BDT09, Bis21, ET18, J-G14, Tal14}.

In \cite[Definition 4.5]{ABHS11}, the \emph{Gorenstein homological dimension} ${\rm Ghd}_{\mathbb{Z}}G$ of any group $G$ is defined to be the Gorenstein flat dimension of the trivial $\mathbb{Z}G$-module $\mathbb{Z}$. Analogously, we may define Gorenstein homological dimension of group $G$ over any coefficient ring $R$, denoted by ${\rm Ghd}_RG$, to be the Gorenstein flat dimension of the trivial $RG$-module $R$. The Gorenstein homological dimension of groups generalizes the notion of homological dimension of groups, in the sense that ${\rm Ghd}_RG = {\rm hd}_RG$ if ${\rm hd}_RG$ is finite.

First, we intend to compare ${\rm Ghd}_RG$ with ${\rm Ghd}_{\mathbb{Z}}G$. More general, we concern the Gorenstein homological dimensions under the extension of coefficient rings. If $R\rightarrow S$ is a flat extension of commutative rings,  we show that for any group $G$, ${\rm Ghd}_{S}G\leq {\rm Ghd}_{R}G$ holds; see Theorem \ref{thm:ringext}. This implies that if $R$ is $\mathbb{Z}$-torsion-free, then ${\rm Ghd}_{R}G$ is a refinement of ${\rm Ghd}_{\mathbb{Z}}G$; especially, ${\rm Ghd}_{\mathbb{Q}}G\leq {\rm Ghd}_{\mathbb{Z}}G$ (see Corollary \ref{cor:GhdZG}). A specific case leading to the equality $\mathrm{Ghd}_{R}G = \mathrm{Ghd}_{S}G$ is given in Proposition \ref{prop:equ}, where $G$ is a countable group and $S = R[x]/(x^{n})$.

It is natural to consider the behavior of Gorenstein homological dimensions under extensions of groups. If a subgroup $H$ of $G$ is assumed to be of finite index, it is proved in \cite[Proposition 4.11]{ABHS11} that ${\rm Ghd}_{\mathbb{Z}}H\leq {\rm Ghd}_{\mathbb{Z}}G$. Moreover, the equality ${\rm Ghd}_{\mathbb{Z}}H = {\rm Ghd}_{\mathbb{Z}}G$ holds provided that the supremum of injective length (dimension) of flat $\mathbb{Z}G$-modules, denoted by ${\rm silf}(\mathbb{Z}G)$, is finite; see \cite[Theorem 4.18]{ABHS11}.

Our result strengthens and extends \cite[Proposition 4.11]{ABHS11} and \cite[Theorem 4.18]{ABHS11}. We show in Theorem \ref{thm:groupext} that if $H$ is a subgroup of $G$ with finite index, then ${\rm Ghd}_{R}H = {\rm Ghd}_{R}G$ for any commutative ring $R$, where the assumption for the finiteness of ``${\rm silf}$'' in \cite{ABHS11} is removed. Recall that Serre's Theorem establishes an equality between cohomology dimensions of a torsion-free group and its subgroup of finite index; see details in \cite[Theorem VIII 3.1]{Bro82}. In this sense, the result can also be regarded as a Gorenstein homological version of Serre's Theorem.

There is a homological characterization for finite groups immediately; see Corollary \ref{cor:fgroup}. That is, $G$ is a finite group if and only if ${\rm Ghd}_RG = 0$ for any commutative ring $R$; this generalizes \cite[Proposition 4.12]{ABHS11}. It is worth to compare this with a well-known fact: for any non-trivial finite group $G$ one always has ${\rm hd}_\mathbb{Z}G = \infty$, since the finiteness of ${\rm hd}_\mathbb{Z}G$ implies the group $G$ is necessarily torsion-free, while every finite group is torsion. We may also compare this result with \cite[Corollary 2.3]{ET18}, which concerns Gorenstein cohomological dimension of finite groups.

\section {\bf Gorenstein homological dimension of groups}

Let $A$ be a ring, $M$ be a (left) $A$-module. An acyclic complex
$$\cdots\longrightarrow F_{1}\longrightarrow F_{0}\longrightarrow F_{-1}\longrightarrow\cdots$$
is called a \emph{totally acyclic complex of flat modules}, provided that each $F_i$ is a flat (left) $A$-module, and for any injective (right) $A$-module $I$, the complex remains acyclic after applying $I\otimes_{A}-$. The module $M$ is said to be \emph{Gorenstein flat} if there exists a totally acyclic complex of flat modules such that $M \cong \mathrm{Ker}(F_0\rightarrow F_{-1})$; see \cite{EJT93, EJ00}.

It is proved in \cite[Theorem 3.7]{Hol04} that for a right coherent ring $A$, the class of Gorenstein flat left $A$-modules is closed under extensions and direct summands. This result is extended and generalized to any ring recently by the work \cite{SS20}. The following is immediate from \cite[Corollary 4.12]{SS20}.

\begin{lemma}\label{lem:gf-closed}
Let $A$ be a ring. Then the class of Gorenstein flat $A$-modules is closed under extensions and direct summands.
\end{lemma}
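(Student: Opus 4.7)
The plan is to deduce both closure properties simultaneously by realizing $\mathcal{GF}$, the class of Gorenstein flat left $R$-modules, as the left-hand side of a complete hereditary cotorsion pair on $R\text{-Mod}$. Once such a cotorsion pair is established, its left-hand class is automatically closed under extensions, direct summands, and transfinite compositions, so both assertions of the lemma fall out of standard cotorsion pair theory.

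My first step would be to record the routine fact that $\mathcal{GF}$ is closed under arbitrary direct sums: the termwise direct sum of totally acyclic complexes of flat modules has flat entries, and $I \otimes_{R} -$ preserves coproducts for any injective right $R$-module $I$. Next, I would invoke Pontryagin duality $(-)^{+} = \mathrm{Hom}_{\mathbb{Z}}(-, \mathbb{Q}/\mathbb{Z})$ to convert the Tor-vanishing condition defining Gorenstein flatness of a left module $M$ into an Ext-vanishing condition involving the character module $M^{+}$, thereby moving the problem into a setting where the machinery of cotorsion pairs applies. The goal is then to construct, for every left $R$-module $M$, a short exact sequence $0 \to K \to F \to M \to 0$ with $F \in \mathcal{GF}$ and $\mathrm{Ext}^{1}_{R}(F', K) = 0$ for all $F' \in \mathcal{GF}$, by means of a transfinite small object argument applied to a generating set.

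The principal obstacle is producing such a generating set, i.e.\ establishing that $\mathcal{GF}$ is deconstructible over an \emph{arbitrary} ring. When $R$ is right coherent, every Gorenstein flat module is a filtered colimit of finitely presented Gorenstein projective modules (Enochs--Jenda), and deconstructibility is essentially immediate; without coherence this characterization fails and the standard approach collapses. The route I would take, following \v{S}aroch--\v{S}\v{t}ov\'{\i}\v{c}ek, is to introduce the auxiliary class of projectively coresolved Gorenstein flat modules, namely cycles of totally acyclic complexes of projective modules whose acyclicity is preserved by $I \otimes_{R} -$ for every injective right $R$-module $I$; this smaller class is deconstructible by a Kaplansky-style transfinite filtration argument on projective resolutions. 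One then checks that it generates the same cotorsion pair as $\mathcal{GF}$, and the required approximations appear. Granted the cotorsion pair, extension closure is immediate from its hereditary character, and direct summand closure is the standard retract argument for the left class of any cotorsion pair.
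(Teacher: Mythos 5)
First, note that the paper does not prove this lemma at all: it is quoted verbatim from \v{S}aroch--\v{S}\v{t}ov\'{\i}\v{c}ek \cite[Corollary 3.12]{SS20}, so there is no in-paper argument to compare yours against. Your outline is a reasonable roadmap of the strategy of that cited source --- character-module duality, the auxiliary class of projectively coresolved Gorenstein flat (PGF) modules, deconstructibility, and cotorsion-pair completeness are indeed the main actors --- but as a proof it has a concrete error and defers all of the genuinely hard content.

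The error: PGF does \emph{not} generate the same cotorsion pair as $\mathcal{GF}$. If it did, then $\mathcal{GF}\subseteq{}^{\perp}(\mathcal{GF}^{\perp})={}^{\perp}(\mathrm{PGF}^{\perp})=\mathrm{PGF}$, since PGF is itself the left class of a complete cotorsion pair generated by a set and hence closed under filtrations and summands. But PGF modules are Gorenstein projective, and already over $\mathbb{Z}$ the flat module $\mathbb{Q}$ is Gorenstein flat without being Gorenstein projective (Gorenstein projective $=$ free over $\mathbb{Z}$), so $\mathcal{GF}\not\subseteq\mathrm{PGF}$ and the two pairs differ. The actual bridge in \cite{SS20} is a structural theorem: every Gorenstein flat module $M$ sits in a short exact sequence $0\to K\to G\to M\to 0$ with $G\in\mathrm{PGF}$ and $K$ flat, obtained by showing that the kernel of a special PGF-precover of $M$ is flat; extension closure of $\mathcal{GF}$, its deconstructibility, and the perfectness of $(\mathcal{GF},\mathcal{GF}^{\perp})$ are then deduced from this together with closure of $\mathcal{GF}$ under direct limits. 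Two further steps you treat as routine are not: the equivalence ``$M$ Gorenstein flat $\Longleftrightarrow$ $M^{+}$ Gorenstein injective'' over an \emph{arbitrary} ring is itself one of the main theorems of \cite{SS20} (only the standard adjunction $\mathrm{Tor}_{i}^{R}(I,M)^{+}\cong\mathrm{Ext}_{R}^{i}(I,M^{+})$ is routine), and the deconstructibility of PGF requires a genuine singular-compactness argument, not merely a Kaplansky-style filtration of projective resolutions. Finally, for the two properties actually asserted in the lemma there is a shorter exit once extension closure is in hand: $\mathcal{GF}$ is projectively resolving and closed under countable direct sums, so closure under direct summands follows from the elementary Eilenberg-swindle argument of \cite[Proposition~1.4]{Hol04}, with no need for the full cotorsion pair.
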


The \emph{Gorenstein flat dimension} of modules is defined in the standard way by using resolutions. Let $M$ be any left $A$-module. The Gorenstein flat dimension of $M$, denoted by $\mathrm{Gfd}_{A}M$, is defined by declaring that ${\rm Gfd}_{A}M \leq n$ if and only if $M$ has a Gorenstein flat resolution $0\rightarrow G_{n}\rightarrow\cdots\rightarrow G_{1}\rightarrow G_{0}\rightarrow M\rightarrow 0$ of length $n$; see for example \cite{EJ00, Hol04}.

\begin{remark}\label{rem:GF-closed}
It is not easy to show that the class of Gorenstein flat modules is closed under extensions; however, this basic property is crucial for studying homological properties of Gorenstein flat modules. For this reason, the rings are assumed to be coherent in \cite{Hol04}; as a generalization, the notion of $GF$-closed rings is introduced in \cite{Ben09}. Thanks to \cite[Corollary 4.12]{SS20}, now we can remove the assumptions of coherent rings and $GF$-closed rings in many situations, see for example \cite[Theorem 3.14]{Hol04} and the main theorem of \cite{YL12}, when dealing with Gorenstein flat modules and Gorenstein flat dimension of modules.
\end{remark}

Recall that for any group $G$, the {\em Gorenstein homological dimension} of $G$ is defined to be the Gorenstein flat dimension of the $\mathbb{Z}G$-module $\mathbb{Z}$ with the trivial group action; see \cite[Definition 4.5]{ABHS11}. Analogously, we have the following.

\begin{definition}\label{def:Ghd}
Let $R$ be a ring. For any group $G$, the Gorenstein homological dimension of $G$ over $R$, denoted by ${\rm Ghd}_{R}G$, is defined to be the Gorenstein flat dimension of the trivial $RG$-module $R$.
\end{definition}

Let $R$ be a, $G$ be any group. Recall that the homological dimension of a group $G$ over $R$, denoted by ${\rm hd}_{R}G$, is defined to be the flat dimension ${\rm fd}_{RG}R$ of the trivial $RG$-module $R$. Since flat modules are necessarily Gorenstein flat, and the flat dimension of any Gorenstein flat module is either zero or infinity, it follows that ${\rm Ghd}_{R}G\leq {\rm hd}_{R}G$ with the equality if ${\rm hd}_{R}G$ is finite.

First, we intend to compare ${\rm Ghd}_{R}G$ with ${\rm Ghd}_{\mathbb{Z}}G$. More general, we consider Gorenstein homological dimensions of the group under extensions of coefficient rings; see Theorem \ref{thm:ringext}.

\begin{lemma}\label{lem:flatext}
Let $R\rightarrow S$ be a flat extension of commutative rings, i.e. $S$ is flat as an $R$-module. Then, for any group $G$, $RG\rightarrow SG$ is also a flat extension.
\end{lemma}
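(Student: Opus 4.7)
The plan is to establish a canonical isomorphism $SG \cong S\otimes_R RG$ of $RG$-modules, and then use it to transfer the hypothesis that $S$ is flat over $R$ into the flatness of $SG$ as an $RG$-module.

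First I would observe that since $R \to S$ is a morphism of commutative rings, the universal property of the group ring gives an $R$-algebra map $RG \to SG$ extending $R \to S$ and fixing $G$ on the nose. Using the fact that $G$ is a free $R$-basis of $RG$ and a free $S$-basis of $SG$, I would write down the explicit map
\[
S \otimes_R RG \longrightarrow SG, \qquad s\otimes \sum_{g} r_g g \longmapsto \sum_{g} (s\cdot \phi(r_g))\, g,
\]
and check it is an isomorphism of right $RG$-modules (with $RG$ acting on $S\otimes_R RG$ through the right factor and trivially on $S$). The same reasoning works on the left, so the distinction will not matter.

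Next, for any left $RG$-module $M$, I would verify the natural isomorphism
\[
(S\otimes_R RG)\otimes_{RG} M \;\cong\; S \otimes_R M,
\]
sending $(s\otimes x)\otimes m \mapsto s\otimes(xm)$, where on the right $M$ is viewed as an $R$-module by restriction along $R \to RG$. This is the standard base-change identification and is the key computational ingredient.

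With these two isomorphisms in hand, the flatness assertion becomes essentially automatic: given an injection $N \hookrightarrow M$ of left $RG$-modules, the induced map $(S\otimes_R RG)\otimes_{RG} N \to (S\otimes_R RG)\otimes_{RG} M$ becomes, after the identification of the previous step, the map $S\otimes_R N \to S\otimes_R M$ obtained from an injection of $R$-modules by forgetting scalars. Flatness of $S$ over $R$ then guarantees injectivity, showing that $SG$ is flat as an $RG$-module. There is no real obstacle; the only thing that requires a little care is keeping the left/right module structures straight when identifying $SG$ with $S\otimes_R RG$, but this is routine.
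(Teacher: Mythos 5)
Your proof is correct, but it takes a different route from the paper's. Both arguments start from the same identification $SG \cong RG\otimes_R S$, but the paper then invokes Lazard's theorem to write the flat $R$-module $S$ as a filtered colimit $\varinjlim P_i$ of finitely generated projective $R$-modules, so that $SG \cong \varinjlim (RG\otimes_R P_i)$ is a filtered colimit of projective $RG$-modules and hence flat. You instead verify flatness directly: the associativity isomorphism $(S\otimes_R RG)\otimes_{RG}M \cong S\otimes_R M$ turns the functor $SG\otimes_{RG}-$ into $S\otimes_R-$ composed with restriction of scalars, and exactness then follows from $R$-flatness of $S$. Your argument is the standard ``flat base change of an algebra'' proof; it is more elementary in that it needs neither Lazard's characterization of flat modules nor the fact that filtered colimits of flat modules are flat, at the cost of being slightly longer. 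The only point requiring care, which you correctly flag, is the left/right bookkeeping: your computation shows $SG$ is flat as a right $RG$-module, and one passes to the other side either by symmetry of the argument or via the anti-automorphism $g\mapsto g^{-1}$ of the group ring.
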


\begin{proof}
Since $S$ is a flat $R$-module, by Lazard's theorem we have $S \cong \underrightarrow{\mathrm{lim}}P_i$, where $P_i$ are finitely generated projective $R$-modules. Then, the isomorphism $SG \cong RG\otimes_{R}S \cong \underrightarrow{\mathrm{lim}}(RG\otimes_{R}P_i)$ implies that $SG$ is a flat $RG$-module, that is, $RG\rightarrow SG$ is a flat extension, as expected.
\end{proof}

\begin{lemma}\label{lem:SG-GF}
Let $R\rightarrow S$ be a flat extension of commutative rings, and $G$ be any group. For any Gorenstein flat $RG$-module $M$, the induced $SG$-module $S\otimes_{R}M$ is also Gorenstein flat.
\end{lemma}

\begin{proof}
Let $M$ and $N$ be $RG$-modules. By using the anti-automorphism $g\rightarrow g^{-1}$ of $G$, we can set $mg = g^{-1}m$ for any $g\in G$ and $m\in M$. Note that $M\otimes_{RG}N$ is obtained from $M\otimes_{R}N$ by introducing the relations $g^{-1}m\otimes n = mg\otimes n = m\otimes gn$. Then, we replace $m$ by $gm$ and obtain $m\otimes n = gm\otimes gn$. Hence, we see that $M\otimes_{RG}N = (M\otimes_{R}N)_{G}$, where $G$ acts diagonally on $M\otimes_{R}N$, and the group of co-invariants $(M\otimes_{R}N)_{G}$ is defined to be the largest quotient of $M\otimes_{R}N$ on which $G$ acts trivially.

Let $F$ be any flat left $RG$-module, which can be restricted to be a flat $R$-module. By considering $S$ as an $SG$-$R$-bimodule, we have an induced $SG$-module $S\otimes_{R} F$, where the $G$-action on $S$ is trivial. Let $M$ be any right $SG$-module, which has a natural $RG$-module structure by the ring extension $RG\rightarrow SG$. We have
$$M\otimes_{SG}(S\otimes_{R}F) = (M\otimes_{S}(S\otimes_{R}F))_{G} \cong (M\otimes_{R}F)_{G} = M\otimes_{RG}F.$$
This implies that the functor $-\otimes_{SG}(S\otimes_{R}F)$ is exact since the $RG$-module $F$ is flat and the functor  $-\otimes_{RG}F$ is exact. Hence, the induced $SG$-module $S\otimes_{R} F$ is flat.

Now assume that $M$ is a Gorenstein flat $RG$-module. Then $M$ admits a totally acyclic complex of flat $RG$-modules
$\mathbb{F} = \cdots\rightarrow F_{1}\rightarrow F_{0}\rightarrow F_{-1}\rightarrow\cdots$ such that $M \cong \mathrm{Ker}(F_0\rightarrow F_{-1})$. Since $S$ is a flat $R$-module, we obtain an acyclic complex of flat $SG$-modules
$$S\otimes_{R}\mathbb{F} = \cdots\longrightarrow S\otimes_{R}F_1\longrightarrow S\otimes_{R}F_0\longrightarrow S\otimes_{R}F_{-1}\longrightarrow \cdots$$
by applying the functor $S\otimes_{R}-$.

For any injective right $SG$-module $I$, we have an isomorphism $I\otimes_{SG}(S\otimes_{R}\mathbb{F})\cong I\otimes_{RG}\mathbb{F}$. We claim that $I$ is restricted to be an injective right $RG$-module. There are natural equivalences of functors
$$\mathrm{Hom}_{RG}(-, I) \cong \mathrm{Hom}_{RG}(-, \mathrm{Hom}_{SG}(SG, I))\cong \mathrm{Hom}_{SG}(-\otimes_{RG}SG, I),$$
where the second one is from the standard adjunction. Since $R\rightarrow S$ is assumed to be a flat extension, it follows from Lemma \ref{lem:flatext} that $SG$ is a flat $RG$-module. Moreover, by noting $I$ is an injective right $SG$-module, we infer that the functor $\mathrm{Hom}_{SG}(-\otimes_{RG}SG, I)$ is exact. Hence, $\mathrm{Hom}_{RG}(-, I)$ is an exact functor and $I$ is an injective right $RG$-module, as desired.

Then, for the totally acyclic complex of flat $RG$-modules $\mathbb{F}$, the complex $I\otimes_{RG}\mathbb{F}$ remains acyclic. Consequently, the complex $I\otimes_{SG}(S\otimes_{R}\mathbb{F})$ is acyclic, i.e. $S\otimes_{R}\mathbb{F}$ is a totally acyclic complex of flat $SG$-modules, and moreover, $S \otimes_{R}M \cong \mathrm{Ker}(S\otimes_{R}F_0\rightarrow S\otimes_{R}F_{-1})$ is a Gorenstein flat $SG$-module. This completes the proof.
\end{proof}

Now, we are in a position to state the following.

\begin{theorem}\label{thm:ringext}
Let $R\rightarrow S$ be a flat extension of commutative rings. For any group $G$, we have $${\rm Ghd}_{S}G\leq {\rm Ghd}_{R}G.$$
\end{theorem}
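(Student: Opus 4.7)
The plan is to transport a Gorenstein flat resolution of the trivial $RG$-module $R$ to a Gorenstein flat resolution of the trivial $SG$-module $S$ by applying the functor $SG \otimes_{RG} -$. Let $n = \mathrm{Ghd}_R G$ and take a Gorenstein flat resolution $0 \to G_n \to \cdots \to G_0 \to R \to 0$ over $RG$. Since $SG$ is flat over $RG$ by Lemma \ref{lem:flatext}, applying $SG \otimes_{RG} -$ preserves exactness, and a straightforward identification (absorbing the trivial action) yields $SG \otimes_{RG} R \cong S$ as trivial $SG$-modules. Thus the real content is to check that the functor $SG \otimes_{RG} -$ sends Gorenstein flat $RG$-modules to Gorenstein flat $SG$-modules; once this is established, we obtain a Gorenstein flat resolution $0 \to SG \otimes_{RG} G_n \to \cdots \to SG \otimes_{RG} G_0 \to S \to 0$ of length $n$, giving $\mathrm{Ghd}_S G \leq n$.

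So the crux is the following claim: if $M$ is a Gorenstein flat $RG$-module witnessed by a totally acyclic complex $\mathbf{F} : \cdots \to F_{-1} \to F_0 \to F_1 \to \cdots$ of flat $RG$-modules with $M \cong \mathrm{Ker}(F_0 \to F_1)$, then $SG \otimes_{RG} \mathbf{F}$ is totally acyclic over $SG$ and its appropriate kernel is $SG \otimes_{RG} M$. Flatness of $SG$ over $RG$ immediately gives that $SG \otimes_{RG} \mathbf{F}$ is an exact complex of flat $SG$-modules, and that tensoring commutes with the kernel, so $SG \otimes_{RG} M \cong \mathrm{Ker}(SG \otimes_{RG} F_0 \to SG \otimes_{RG} F_1)$.

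The main obstacle is showing that $I \otimes_{SG} (SG \otimes_{RG} \mathbf{F})$ is exact for every injective right $SG$-module $I$. The identification $I \otimes_{SG} (SG \otimes_{RG} F_i) \cong I \otimes_{RG} F_i$ reduces this to showing that restriction along the flat ring map $RG \to SG$ sends injective right $SG$-modules to injective right $RG$-modules. This follows from the adjunction $\mathrm{Hom}_{RG}(-,I) \cong \mathrm{Hom}_{SG}(- \otimes_{RG} SG, I)$ together with Lemma \ref{lem:flatext}: exactness of $- \otimes_{RG} SG$ plus injectivity of $I$ over $SG$ forces $\mathrm{Hom}_{RG}(-, I)$ to be exact. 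Combining these, $I \otimes_{RG} \mathbf{F}$ is exact since $\mathbf{F}$ is totally acyclic over $RG$, and we conclude that $SG \otimes_{RG} \mathbf{F}$ is totally acyclic over $SG$, completing the argument.
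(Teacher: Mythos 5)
Your proposal is correct and follows essentially the same route as the paper: the paper's functor $-\otimes_R S$ (with $G$ acting diagonally, hence trivially on the factor $S$) is naturally isomorphic to your $SG\otimes_{RG}-$, and both arguments hinge on the same two facts --- $SG$ is flat over $RG$ (Lemma \ref{lem:flatext}) and injective $SG$-modules restrict to injective $RG$-modules via the tensor--hom adjunction --- to show that base change carries totally acyclic complexes of flat $RG$-modules to totally acyclic complexes of flat $SG$-modules. The only cosmetic difference is the final step: you tensor a length-$n$ Gorenstein flat resolution directly and invoke Definition \ref{def:gf-dim}, while the paper tensors the special short exact sequence $0\to K\to P\to R\to 0$ from \cite[Theorem 3.17]{Hol04}; both yield $\mathrm{Gfd}_{SG}S\le n$.
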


\begin{proof}
There is nothing to prove if ${\rm Ghd}_{R}G = \infty$. Then, it only suffices to consider the case where ${\rm Ghd}_{R}G = n$ is finite. By \cite[Theorem 3.17]{Hol04}, there exists an exact sequence of $RG$-modules $0\rightarrow K\rightarrow M\rightarrow R\rightarrow 0$, where $M$ is Gorenstein flat, ${\rm fd}_{RG}K = n-1$. By applying the functor $S\otimes_{R}-$, we obtain an exact sequence of induced $SG$-modules $$0\longrightarrow S\otimes_{R}K \longrightarrow S\otimes_{R}M \longrightarrow S\longrightarrow 0.$$
By Lemma \ref{lem:SG-GF}, $S\otimes_{R}M$ is a Gorenstein flat $SG$-module. Remark that for any flat $RG$-module $F$, the induced $SG$-module $S\otimes_{R} F$ is also flat. Then, ${\rm fd}_{SG}(S\otimes_{R}K) = n-1$, and we infer from the above exact sequence that ${\rm Ghd}_{S}G = {\rm Gfd}_{SG}S \leq n$. This completes the proof.
\end{proof}

The following is immediate, which implies that under a quite mild condition, the Gorenstein homological dimension ${\rm Ghd}_{R}G$ of a group $G$ over any commutative ring $R$ is a refinement of ${\rm Ghd}_{\mathbb{Z}}G$, the one introduced in \cite{ABHS11} over the ring of integers $\mathbb{Z}$.

\begin{corollary}\label{cor:GhdZG}
Let $R$ be a commutative ring, $G$ a group. If $R$ is a $\mathbb{Z}$-flat module ($\mathbb{Z}$-torsion-free), then ${\rm Ghd}_{R}G\leq {\rm Ghd}_{\mathbb{Z}}G$. In particular, ${\rm Ghd}_{\mathbb{Q}}G\leq {\rm Ghd}_{\mathbb{Z}}G$.
\end{corollary}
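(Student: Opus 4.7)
The plan is to derive this corollary as an immediate consequence of Theorem \ref{thm:ringext}. The first step is to translate the hypothesis ``$R$ is torsion-free as a $\mathbb{Z}$-module'' into the flatness hypothesis required by the theorem. This is a standard fact: since $\mathbb{Z}$ is a principal ideal domain, a $\mathbb{Z}$-module is flat if and only if it is torsion-free. Thus the parenthetical remark in the statement is simply explaining the scope of the hypothesis, and we may legitimately regard the inclusion $\mathbb{Z}\hookrightarrow R$ as a flat extension of commutative rings.

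Once that identification is in place, I would apply Theorem \ref{thm:ringext} directly with the ring homomorphism $\mathbb{Z}\rightarrow R$ and the same group $G$. The theorem yields $\mathrm{Ghd}_R G \leq \mathrm{Ghd}_{\mathbb{Z}}G$, which is exactly the claimed inequality. No additional manipulation is needed, because the theorem is set up in exactly this generality.

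For the ``in particular'' assertion, I would simply observe that $\mathbb{Q}$ is a torsion-free $\mathbb{Z}$-module (indeed, it is the field of fractions of $\mathbb{Z}$, so it is a flat $\mathbb{Z}$-module), and then specialize $R=\mathbb{Q}$ in the inequality just established to obtain $\mathrm{Ghd}_{\mathbb{Q}}G \leq \mathrm{Ghd}_{\mathbb{Z}}G$. There is no real obstacle in the argument; the whole content of the corollary lies in recognizing that the torsion-free hypothesis is the flatness hypothesis of Theorem \ref{thm:ringext} in disguise, so the proof reduces to one invocation of that theorem together with the elementary observation about $\mathbb{Q}$.
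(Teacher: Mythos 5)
Your proposal is correct and matches the paper's approach exactly: the paper states the corollary as an immediate consequence of Theorem \ref{thm:ringext}, and your only added content — that torsion-free over the PID $\mathbb{Z}$ is equivalent to $\mathbb{Z}$-flatness, so $\mathbb{Z}\rightarrow R$ is a flat extension — is precisely the identification the paper leaves implicit. The specialization to $R=\mathbb{Q}$ is likewise exactly as intended.
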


\begin{proposition}\label{prop:equ}
Let $R$ be a commutative ring, $G$ be a countable group. Let $S = R[x]/(x^{n})$ be the quotient of the polynomial ring, where $n > 1$ is an integer, and $x$ is a variable which is supposed to commute with all the elements of $G$. Then we have $\mathrm{Ghd}_{R}G = \mathrm{Ghd}_{S}G$.
\end{proposition}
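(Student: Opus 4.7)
The plan is to establish the two inequalities $\mathrm{Ghd}_{S}G \leq \mathrm{Ghd}_{R}G$ and $\mathrm{Ghd}_{R}G \leq \mathrm{Ghd}_{S}G$ separately. The first is immediate from Theorem \ref{thm:ringext}, since $S = R[x]/(x^{n})$ is free of rank $n$ as an $R$-module (with basis $1, x, \ldots, x^{n-1}$), so that $R \to S$ is a flat extension of commutative rings.

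For the reverse inequality, I would exploit that $S = R[x]/(x^{n})$ is a Frobenius $R$-algebra. The $R$-linear form $\lambda : S \to R$ defined by $\lambda(x^{n-1}) = 1$ and $\lambda(x^{i}) = 0$ for $i < n-1$ induces a bimodule isomorphism $S \cong \mathrm{Hom}_{R}(S, R)$. Since $S$ is finitely generated projective over $R$, base-changing along $R \to RG$ yields $\mathrm{Hom}_{RG}(SG, RG) \cong RG \otimes_{R} \mathrm{Hom}_{R}(S, R) \cong SG$ as $(SG, RG)$-bimodules, so $RG \to SG$ is itself a Frobenius extension. Parallel to the argument in the proof of Theorem \ref{thm:groupext} (see also Remark \ref{rem:FroExt}), restriction of scalars along $RG \to SG$ then preserves Gorenstein flatness: given a totally acyclic complex $\mathbf{F}$ of flat $SG$-modules, its restriction is an exact complex of flat $RG$-modules by transitivity of flatness, and for any injective $RG$-module $I$, the Frobenius property gives an $SG$-module isomorphism $SG \otimes_{RG} I \cong \mathrm{Hom}_{RG}(SG, I)$, which is injective over $SG$, so that $\mathbf{F} \otimes_{RG} I \cong \mathbf{F} \otimes_{SG}(SG \otimes_{RG} I)$ is exact.

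Finally, a Gorenstein flat resolution of the trivial $SG$-module $R$ of length $m = \mathrm{Ghd}_{S}G$ restricts to a Gorenstein flat resolution of the trivial $RG$-module $R$ of length $m$ over $RG$ (noting that the trivial $SG$-action on $R$ restricts to the trivial $RG$-action, since $SG \to R$ factors as $SG \twoheadrightarrow RG \twoheadrightarrow R$), yielding $\mathrm{Ghd}_{R}G \leq m$ and hence the equality. The main subtlety lies in carefully verifying the Frobenius extension property of $RG \to SG$ and the corresponding preservation of injective modules. The role of the countability hypothesis on $G$ is not transparent in this route; it may be that the authors invoke it to apply a more delicate change-of-rings argument specific to $S = R[x]/(x^{n})$, but the Frobenius-extension approach outlined above does not appear to require it.
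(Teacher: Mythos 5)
Your overall strategy is the paper's: the first inequality via Theorem \ref{thm:ringext}, and the reverse via the observation that $SG = RG[x]/(x^n)$ makes $RG\rightarrow SG$ a Frobenius extension, so that restriction of scalars controls Gorenstein flat dimension. Your verification of the Frobenius property (the trace form $\lambda$ with $\lambda(x^{n-1})=1$, base change along $R\rightarrow RG$) and your argument that restriction along $RG\rightarrow SG$ sends totally acyclic complexes of flat $SG$-modules to totally acyclic complexes of flat $RG$-modules are both correct, and in fact more detailed than what the paper records. Your remark that countability of $G$ plays no visible role is also consistent with the paper: its proof never invokes it either.

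However, the last step has a genuine slip. You restrict ``a Gorenstein flat resolution of the trivial $SG$-module $R$ of length $m=\mathrm{Ghd}_S G$,'' viewing $R$ as an $SG$-module through the augmentation $SG\twoheadrightarrow RG\twoheadrightarrow R$. But by Definition \ref{def:Ghd}, $\mathrm{Ghd}_S G$ is $\mathrm{Gfd}_{SG}S$, where the trivial $SG$-module is $S=R[x]/(x^n)$ itself (with $G$ acting trivially and $x$ acting by multiplication), not the quotient $R=S/(x)$. The assertion that the $SG$-module $R$ admits a Gorenstein flat resolution of length exactly $m$ is unjustified, and $\mathrm{Gfd}_{SG}R$ need not coincide with $\mathrm{Gfd}_{SG}S$ a priori (indeed $R=S/(x)$ is typically of infinite flat dimension over $S$, so some argument would be required even to bound its Gorenstein flat dimension). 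The repair is exactly the paper's route: restrict a length-$m$ Gorenstein flat resolution of the $SG$-module $S$ to obtain $\mathrm{Gfd}_{RG}S\leq \mathrm{Gfd}_{SG}S=m$, and then observe that as an $RG$-module $S=\bigoplus_{i=0}^{n-1}Rx^i$ is a finite direct sum of copies of the trivial $RG$-module $R$, so $R$ is a direct summand of $S$ over $RG$ and $\mathrm{Ghd}_R G=\mathrm{Gfd}_{RG}R\leq \mathrm{Gfd}_{RG}S\leq m$ (using Lemma \ref{lem:gf-closed} to pass to summands). With that substitution your argument closes correctly.
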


\begin{proof}
Note that $R\rightarrow S$ is a flat extension, and then $\mathrm{Ghd}_{S}G \leq \mathrm{Ghd}_{R}G$ follows by Theorem \ref{thm:ringext}. We consider $R$ and $S$ as $RG$-modules with trivial $G$-actions, then $R$ is a direct summand of $S = R[x]/(x^{n})$, and $\mathrm{Ghd}_{R}G = \mathrm{Gfd}_{RG}R \leq \mathrm{Gfd}_{RG}S$. It remains to prove the inequality $\mathrm{Gfd}_{RG}S \leq \mathrm{Ghd}_{S}G = \mathrm{Gfd}_{SG}S$.

Observe that $SG = RG[x]/(x^n)$, which is easily seen from the equation
$$\sum_{j=0}^{n-1}(\sum_{i\in \mathbb{N}}r_{ij}g_i)x^j =  \sum_{i\in \mathbb{N}}(\sum_{j=0}^{n-1}r_{ij}x^j)g_i,~~~~~~~~ \forall r_{ij}\in R, g_i\in G.$$
Let $M$ be a Gorenstein flat $SG$-module and $\mathbb{F}= \cdots\rightarrow F_{1}\rightarrow F_{0}\rightarrow F_{-1}\rightarrow\cdots$ be a totally acyclic complex of flat $SG$-modules such that $M \cong \mathrm{Ker}(F_0\rightarrow F_{-1})$. Let $I$ be any injective right $RG$-module. There are isomorphisms $I\otimes_{RG}SG \cong {\rm Hom}_{RG}(SG, I)$ and $I\otimes_{RG}\mathbb{F}\cong I\otimes_{RG}SG\otimes_{SG}\mathbb{F}$. We imply that the induced right $SG$-module  $I\otimes_{RG}SG$ is injective, and the complex $I\otimes_{RG}\mathbb{F}$ is acyclic. That is, by restriction we can obtain a totally acyclic complex $\mathbb{F}$ of flat $RG$-modules, and hence $M$ is restricted to be a Gorenstein flat $RG$-module.

The inequality $\mathrm{Gfd}_{RG}S \leq \mathrm{Ghd}_{S}G$ is obviously true if $\mathrm{Ghd}_{S}G = \infty$. Now we assume that $\mathrm{Ghd}_{S}G = n$ is finite. There is an exact sequence of $SG$-modules
$$0\longrightarrow M_n\longrightarrow M_{n-1}\longrightarrow \cdots\longrightarrow M_1\longrightarrow M_0\longrightarrow S\longrightarrow 0,$$
where all $M_i$ are Gorenstein flat. By the above argument, we may consider $M_i$ as Gorenstein flat $RG$-modules, and hence, $\mathrm{Gfd}_{RG}S \leq  n$. This completes the proof.
\end{proof}

\section {\bf A version of Serre's theorem}

In this section, we consider the behavior of Gorenstein homological dimension of subgroups. Let $H$ be a subgroup of $G$ with finite index. Recall that there is an inequality ${\rm Ghd}_{\mathbb{Z}}H\leq {\rm Ghd}_{\mathbb{Z}}G$, and moreover, the equality ${\rm Ghd}_{\mathbb{Z}}H = {\rm Ghd}_{\mathbb{Z}}G$ holds provided the supremum of injective length (dimension) of flat $\mathbb{Z}G$-modules, denoted by ${\rm silf}(\mathbb{Z}G)$, is finite; see \cite[Proposition 4.11]{ABHS11} and \cite[Theorem 4.18]{ABHS11} respectively.

We have the following result, where the finiteness of ``${\rm silf}$'' is not necessarily needed; see Theorem \ref{thm:groupext}. Hence, it strengthens and extends \cite[Proposition 4.11]{ABHS11} and \cite[Theorem 4.18]{ABHS11}. By Serre's Theorem, there is an equality between cohomology dimensions of any torsion-free group and its subgroup of finite index; see details in \cite[Theorem VIII 3.1]{Bro82}. In this sense, the result can also be regarded as a Gorenstein homological version of Serre's Theorem.

Let $H$ be any subgroup of $G$. There exist simultaneously an induction functor $\mathrm{Ind}_H^G = RG\otimes_{RH}-$ and a coinduction functor $\mathrm{Coind}_H^G = \mathrm{Hom}_{RH}(RG, -)$ from the category of $RH$-modules ${\rm Mod}(RH)$ to the category of $RG$-modules ${\rm Mod}(RG)$. We denote by ${\rm Res}_H^G$ the standard restriction functor, which sends every $RG$-module to be an $RH$-module.

We have the following observations.

\begin{lemma}\label{lem:2facts}
Let $G$ be a group, and $H$ be any subgroup of $G$ with finite index. For any $RG$-module $M$, the following hold.
\begin{enumerate}
\item If $M$ is a Gorenstein flat module, then the restricted $RH$-module ${\rm Res}_H^GM$ is also Gorenstein flat.
\item The $RH$-module ${\rm Res}_H^GM$ is Gorenstein flat, if and only if the induced $RG$-module ${\rm Ind}_H^G{\rm Res}_H^GM$ is also Gorenstein flat.
\end{enumerate}
\end{lemma}

\begin{proof}
(1) For the Gorenstein flat $RG$-module $M$, let $\mathbb{F}= \cdots\rightarrow F_{1}\rightarrow F_{0}\rightarrow F_{-1}\rightarrow\cdots$ be a totally acyclic complex of flat $RG$-modules such that $M \cong \mathrm{Ker}(F_0\rightarrow F_{-1})$. By restriction, we obtain an acyclic complex ${\rm Res}_H^G\mathbb{F}$ of flat $RH$-modules. Since the index $[G:H]$ is finite, there is an equivalence of functors $\mathrm{Ind}_H^G \simeq \mathrm{Coind}_H^G$; see for example \cite[Proposition III 5.9]{Bro82}. For any injective right $RH$-module $I$, the $RG$-module $\mathrm{Ind}_H^GI \cong \mathrm{Coind}_H^GI$ is injective, and then the complex ${\rm Ind}_H^GI\otimes_{RG}\mathbb{F}$ is acyclic. We infer from the isomorphism $I\otimes_{RH}{\rm Res}_H^G\mathbb{F}\cong {\rm Ind}_H^GI\otimes_{RG}\mathbb{F}$ that the complex $I\otimes_{RH}{\rm Res}_H^G\mathbf{F}$ is acyclic, and then ${\rm Res}_H^G\mathbb{F}$ is a totally acyclic complex of flat $RH$-modules. Consequently, $M$ is restricted to be a Gorenstein flat $RH$-module, as claimed.

(2) For the ``only if'' part, note that $RG$ is a projective $RH$-module. For the Gorenstein flat $RH$-module ${\rm Res}_H^GM$, there is a totally acyclic complex of flat $RH$-modules $\mathbb{F}$. Then, we obtain an induced complex ${\rm Ind}_H^G\mathbb{F}$, which is acyclic with each item being flat $RG$-module. Let $E$ be any injective right $RG$-module. The restricted $RH$-module ${\rm Res}_H^GE$ is injective, and we infer from the isomorphism $E\otimes_{RG}{\rm Ind}_H^G\mathbb{F} \cong {\rm Res}_H^GE\otimes_{RH}\mathbb{F}$ that the complex
${\rm Ind}_H^G\mathbb{F}$ is totally acyclic. Hence, the induced module ${\rm Ind}_H^G{\rm Res}_H^GM$ is a Gorenstein flat $RG$-module. Here, we do not need to assume that the index $[G:H]$ is finite.

For any $RG$-module $M$, there is a canonical $RG$-map $${\rm Ind}_H^G{\rm Res}_H^GM = RG\otimes_{RH}{\rm Res}_H^GM\longrightarrow M$$ given by $g\otimes m\mapsto gm$. This map is surjective; moreover, as an $RH$-map it is a split surjective. If the $RG$-module ${\rm Ind}_H^G{\rm Res}_H^GM$ is Gorenstein flat, we infer from (1) that ${\rm Res}_H^G{\rm Ind}_H^G{\rm Res}_H^GM$ is a Gorenstein flat $RH$-module. Consequently, by Lemma \ref{lem:gf-closed} its direct summand ${\rm Res}_H^GM$ is a Gorenstein flat $RH$-module. This proves the ``if'' part.
\end{proof}

\begin{theorem}\label{thm:groupext}
Let $G$ be a group, $H$ be a subgroup of $G$ with finite index. There is an equality
$${\rm Ghd}_{R}G = {\rm Ghd}_{R}H.$$
\end{theorem}

\begin{proof}
To simplify the burden of notations, for any $RG$-module $M$, the restricted $RH$-module ${\rm Res}_H^GM$ will be denoted by $M$ as well. It follows immediately from the first assertion of Lemma \ref{lem:2facts} that for any $RG$-module $N$, one has an inequality ${\rm Gfd}_{RH}N\leq {\rm Gfd}_{RG}N$. In particular, for the trivial $RG$-module $R$, we have $\mathrm{Ghd}_{R}H \leq \mathrm{Ghd}_{R}G$.

It remains to prove ${\rm Ghd}_{R}G\leq {\rm Ghd}_{R}H$. Since the inequality is trivial if ${\rm Ghd}_{R}H = \infty$, it suffices to  assume ${\rm Ghd}_{R}H = n$ is finite. Take an exact sequence $0\rightarrow N\rightarrow F_{n-1}\rightarrow \cdots\rightarrow F_0\rightarrow R\rightarrow 0$ of $RG$-modules with each $F_i$ being flat. Since $F_i$ are restricted to be flat $RH$-modules, it follows from ${\rm Ghd}_{R}H = n$ that as a restricted $RH$-module, $N$ is Gorenstein flat. For the required inequality, it suffices to show that $N$ is a Gorenstein flat $RG$-module.

Let $I$ be any injective right $RG$-module. There is a canonical map of $RG$-modules
$$I\rightarrow {\rm Hom}_{RH}(RG, I) = {\rm Coind}_H^GI$$
given by $x\mapsto (g\mapsto gx)$ for any $x\in I$ and any $g\in G$. This map is injective, and then $I$ is a direct summand of ${\rm Coind}_H^GI$. For any $i>0$, we have ${\rm Tor}_i^{RH}(I, N) = 0$ since by restriction $N$ is a Gorenstein flat $RH$-module, and $I$ is an injective $RH$-module. Then, we infer from the isomorphism
$${\rm Tor}_i^{RG}({\rm Coind}_H^GI, N)\cong {\rm Tor}_i^{RG}({\rm Ind}_H^GI, N)\cong {\rm Tor}_i^{RH}(I, N)$$
that ${\rm Tor}_i^{RG}(I, N) = 0$.

Let $\alpha: N\rightarrow {\rm Ind}_H^GN$ be a composition of the canonical $RG$-map $N\rightarrow {\rm Coind}_H^GN$, followed by the isomorphism ${\rm Coind}_H^GN\rightarrow {\rm Ind}_H^GN$. Then $\alpha$ is an $RG$-monic and is split as an $RH$-map. Since $N$ is a Gorenstein flat $RH$-module, it follows from Lemma \ref{lem:2facts} that the induced module ${\rm Ind}_H^GN$ is a Gorenstein flat $RG$-module. Hence, there is an exact sequence of $RG$-modules
$0\rightarrow {\rm Ind}_H^GN\stackrel{\beta}\rightarrow F_0\rightarrow {\rm Coker}\beta\rightarrow 0$
for which $F_0$ is flat and ${\rm Coker}\beta$ is Gorenstein flat.

Consider the following diagram
$$\begin{xymatrix}@C=20pt{
0 \ar[r] &N \ar[r]^{\gamma} \ar[d]_{\alpha} &F_0 \ar[r] \ar@{=}[d] & {\rm Coker}\gamma \ar[r]\ar[d] & 0\\
0 \ar[r] &{\rm Ind}_H^GN \ar[r]^{\quad\beta}\ar[r] &F_0 \ar[r] &{\rm Coker}\beta \ar[r] & 0
}\end{xymatrix}$$
Since $\alpha: N\rightarrow {\rm Ind}_H^GN$ is a split $RH$-monic, the map $I\otimes_{RH}\alpha$ is injective. Since
${\rm Coker}\beta$ is restricted to be a Gorenstein flat $RH$-module and $I$ is restricted to be an injective $RH$-module, we have
${\rm Tor}_1^{RH}(I, {\rm Coker}\beta) = 0$, which implies that the map $I\otimes_{RH}\beta$ is injective. Hence, the map $I\otimes_{RH}\gamma$ is injective, as well. We infer from the exact sequence
$$0\longrightarrow {\rm Tor}_1^{RH}(I, {\rm Coker}\gamma)\longrightarrow I\otimes_{RH} N\longrightarrow I\otimes_{RH} F_0\longrightarrow I\otimes_{RH} {\rm Coker}\gamma\longrightarrow 0$$
that ${\rm Tor}_1^{RH}(I, {\rm Coker}\gamma) = 0$, and moreover, it yields by \cite[Proposition 3.8]{Hol04} and \cite[Corollary 4.12]{SS20} that the restricted $RH$-module ${\rm Coker}\gamma$ is Gorenstein flat. Analogous to the above argument, we have
${\rm Tor}_1^{RG}({\rm Coind}_H^GI, {\rm Coker}\gamma) \cong {\rm Tor}_1^{RH}(I, {\rm Coker}\gamma)$, and $I$ is a direct summand of ${\rm Coind}_H^GI$ as $RG$-modules. Hence ${\rm Tor}_1^{RG}(I, {\rm Coker}\gamma) = 0$, and furthermore, the sequence $$0\longrightarrow N\longrightarrow F_0\longrightarrow{\rm Coker}\gamma\longrightarrow 0$$ remains exact after applying $I\otimes_{RG}-$.

Then, repeat the above argument for ${\rm Coker}\gamma$, we will obtain inductively an acyclic complex
$$0\longrightarrow N\longrightarrow F_0\longrightarrow F_{-1}\longrightarrow F_{-2}\longrightarrow \cdots$$
with each $F_i$ a flat $RG$-module, which remains acyclic after applying $I\otimes_{RG}-$ for any injective right $RG$-module $I$. By pasting this sequence with the flat resolution of $N$, we will obtain a totally acyclic complex of flat $RG$-modules for $N$, and then $N$ is a Gorenstein flat $RG$-module, as expected. This completes the proof.
\end{proof}

We have the following immediately, which extends \cite[Proposition 4.12]{ABHS11}.

\begin{corollary}\label{cor:fgroup}
Let $G$ be a group. The following conditions are equivalent$:$
\begin{enumerate}
\item $G$ is a finite group.
\item For any commutative ring $R$, ${\rm Ghd}_{R}G = 0$.
\item ${\rm Ghd}_{\mathbb{Z}}G = 0$.
\end{enumerate}
\end{corollary}

\begin{proof}
(1)$\Longrightarrow$(2). Let $H = \{e\}$ be the subgroup of $G$, which only contains the identity element of $G$. Since $G$ is a finite group, the index $[G: H]$ is finite. Hence, ${\rm Ghd}_{R}G = {\rm Ghd}_{R}H = 0$ follows immediately from Theorem \ref{thm:groupext}.

(2)$\Longrightarrow$(3) is trivial, and (3)$\Longleftrightarrow$(1) is precisely \cite[Proposition 4.12]{ABHS11}.
\end{proof}

\begin{remark}\label{rem:FroExt}
If $H$ is a subgroup of $G$ with finite index, then both $(\mathrm{Ind}_H^G, {\rm Res}_H^G)$ and $({\rm Res}_H^G, \mathrm{Ind}_H^G)$ are adjoint pairs of functors. For any associative ring $R$, $RH\rightarrow RG$ is a Frobenius extension of group rings, and the pair of functors $(\mathrm{Ind}_H^G, {\rm Res}_H^G)$ is called a strongly adjoint pair by Morita \cite{Mor65}, or a Frobenius pair by \cite[Definition 1.1]{CIGTN99}. The Gorenstein homological properties of modules under Frobenius extension of rings and Frobenius pairs of functors were studied in \cite{CR22, HLGZ20, Ren18, Ren18+1, Ren19, Zhao}.
\end{remark}

\begin{fund*}
The second author is supported by the National Natural Science Foundation of China (No. 11871125) and Natural Science Foundation of Chongqing, China (No. cstc2018jcyjAX0541).
\end{fund*}

\bigskip

\end{document}